\definecolor{Red}{cmyk}{0,1,1,0}
\definecolor{Blue}{cmyk}{1,1,0,0}
\newtheorem{theorem}{Theorem}
\newtheorem{definition}{Definition}
\newtheorem{corollary}{Corollary}
\newtheorem{proposition}{Proposition}
\newtheorem{remark}{Remark}
\newtheorem{theoremalpha}{Theorem}
\title{Bounds on the Quenched Pressure and  Main Eigenvalue of the Ruelle 
Operator for Brownian Type Potentials}
\author{$(\mathrm{L.C.})^2$}
\date{\today}
\begin{document}
\maketitle
\begin{abstract}
In this paper we consider a random potential derived 
from the Brownian motion. We obtain upper and lower
bounds for the expected value of the main eigenvalue 
of the associated Ruelle operator
and for its quenched topological pressure. 
We also exhibit an isomorphism between 
the space $C(\Omega)$ endowed with its standard norm and 
a proper closed subspace of the Skorokhod space 
which is used to obtain a stochastic functional equation 
for the main eigenvalue and for its associated eigenfunction. 
\end{abstract}

\noindent 
{\small {\bf Key-words}: 
Brownian motion, One-dimensional lattice, Quenched Pressure, 
Random potentials, Ruelle operator, Thermodynamic formalism.}
\\
\noindent 
{\small {\bf MSC2010}: 37A50, 37A60, 60J65, 82B05, 97K50.}

\section{Introduction}\label{secao-introducao}

The basic idea of the Ruelle operator is based on the 
transfer matrix method introduced by Kramers and Wannier and 
independently by Montroll to explicitly compute partition functions 
in Statistical Mechanics. This operator was introduced in 1968 
by David Ruelle \cite{MR0234697} and was used to describe local 
to global properties of systems with infinite range interactions. 
Among other things, Ruelle used this operator to prove the existence of a unique 
DLR-Gibbs measure for a lattice gas system depending on infinitely many coordinates.
The acronym DLR stands for Dobrushin, Lanford and Ruelle. 

With the advent of the Markov partitions due to Y. Sinai, 
remarkable applications of this operator to Hyperbolic dynamical systems 
on compact manifolds were further obtained by Ruelle, Sinai and Bowen, 
see \cite{MR0234697,MR0399421,MR2423393}.
Since its creation, this operator remains to have a great influence in many fields
of pure and applied Mathematics. 
It is a powerful tool to study topological dynamics, 
invariant measures for Anosov flows, statistical mechanics in one-dimension,
meromorphy of the Zelberg and Ruelle dynamical zeta functions, multifractal analysis, 
conformal dynamics in one dimension and fractal dimensions of horseshoes, just to name
a few. For those topics, we recommend, respectively,  
\cite{MR3024807,MR2423393,MR556580,MR1143171,MR742227,MR1085356,MR1920859,MR0399421} 
and references therein. 

In order to describe our contributions,
we shall start by briefly recalling the classical setting of the 
Ruelle operator. A comprehensive exposition of this subject can be found 
in \cite{MR1793194,MR2423393,MR1085356}.

We now introduce the state space and the dynamics that will guide the following discussions. 
In this paper, the state space will be the Bernoulli space 
$\Omega = \{0,1\}^\mathbb{N}$ and the dynamics is given by 
$\sigma: \Omega \to \Omega$, the left shift map. 
When topological concept are concerned, the state space $\Omega$ 
is regarded as a metric space where
the distance between points $x$ and $y \in\Omega$ 
is given by
\[
    d(x,y)= \frac{1}{2^N}, 
    \text{ where } N = \inf \{ i \in \mathbb{N}: x_i \neq y_i\}.
\]
We shall remark that $\Omega$ is a compact metric space when endowed with this metric. 
All of the results presented in this paper can be naturally extended to 
$\{0,1, \dots, m-1\}^\mathbb{N}$, 
$m \geq 2$, but for the sake of simplicity, we 
restrain ourselves to the binary case. A summary of the changes required 
to study this more general case are presented in Section \ref{sec-remarks}.

Back to the case $\Omega = \{0,1\}^\mathbb{N}$,
the set of all real continuous functions defined over $\Omega$ will be 
denoted by $C(\Omega)$ and
the Ruelle transfer operator, or simply the Ruelle operator,
associated to a continuous function $f: \Omega \to \mathbb{R}$ 
will be denoted by $\mathscr{L}_{f}$. 
This operator acts on the space of all continuous functions 
sending $\varphi$ to $\mathscr{L}_{f}(\varphi)$
which is defined for any $x\in\Omega$ by the following expression
\[
\mathscr{L}_{f}(\varphi)(x) 
= 
\sum_{a=0,1} \exp(f(ax))\varphi(ax),
\] 
where $ax\equiv (a,x_1,x_2,\ldots)$. 
The function $f$ is usually called {\bf potential}.

In this work a major role will be played by 
$C^{\gamma}(\Omega)$, the space of $\gamma$-H\"older continuous functions, 
with the H\"older exponent $\gamma$ satisfying $0<\gamma<1/2$. This space is
the set of all functions $f:\Omega\to\mathbb{R}$ satisfying 
\[
\mathrm{Hol}(f)
=
\sup_{x,y\in\Omega:\, x\neq y}
\dfrac{|f(x)-f(y)|}{d(x,y)^{\gamma}}
<+\infty.
\] 
Equipped with its standard norm $\|\cdot\|_{\gamma}$ 
given by 
$
C^{\gamma}(\Omega)\ni f
\mapsto
\|f\|_{\gamma}\equiv \|f\|_{\infty}+ \mathrm{Hol}(f)
$, 
the space $C^{\gamma}(\Omega)$ is a Banach space, 
see \cite{MR1085356} for more details.

Furthermore, we present the suitable version of the Ruelle-Perron-Fr\"obenius Theorem that will be required for our discussions. See 
\cite{MR1793194,MR2423393,MR1085356} for a proof.

\begin{theoremalpha}[Ruelle-Perron-Fr\"obenius, RPF]\label{teo-rpf-holder-case}
If $f$ is a potential in $C^{\gamma}(\Omega)$ for some $0<\gamma<1$, 
then $\mathscr{L}_f: C^{\gamma}(\Omega) \to C^{\gamma}(\Omega)$
have a simple positive eigenvalue of maximal modulus $\lambda_f$
with a corresponding strictly positive eigenfunction $h_f$ 
and a unique Borel probability measure $\nu_{f}$ on $\Omega$ such that,
\begin{itemize}
\item[i)] 
the remainder of the spectrum of 
$\mathscr{L}_f: C^{\gamma}(\Omega) \to C^{\gamma}(\Omega)$
is contained in 
a disc of radius strictly smaller than $\lambda_f$;

\item[ii)] the probability measure $\nu_{f}$ satisfies 
$\mathscr{L}^*_{f}(\nu_{f})=\lambda_{f}\nu_{f}$, where 
$\mathscr{L}^*_{f}$ denotes the dual of the Ruelle operator; 
  
\item[iii)] 
for all continuous functions $\varphi\in C(\Omega)$ we have
\[
\lim_{n\to\infty}
\left\|
\frac{\mathscr{L}^{n}_{f}(\varphi)}{\lambda_{f}^{n}}
-h_f \int_{\Omega} \varphi\,  d\nu_{f}
\right\|_{\infty}
=
0.
\]
\end{itemize}
\end{theoremalpha}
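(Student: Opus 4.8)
The plan is to reproduce the classical proof of the Ruelle--Perron--Fr\"obenius theorem for H\"older potentials, organized in four steps: (i) obtain the eigenmeasure by a fixed-point argument; (ii) obtain the eigenfunction by a Ces\`aro--compactness argument resting on a bounded-distortion estimate; (iii) pass to an associated normalized operator; (iv) prove a spectral gap for the normalized operator and translate everything back. For the eigenmeasure, since $\Omega$ is a compact metric space, the set $\mathcal{P}(\Omega)$ of Borel probability measures is convex and weak-$*$ compact, and the map $\mu \mapsto \mathscr{L}_f^*\mu/\bigl(\int \mathscr{L}_f(\mathbf 1)\,d\mu\bigr)$ is a continuous self-map of $\mathcal{P}(\Omega)$; the Schauder--Tychonoff theorem provides a fixed point $\nu_f$, giving $\mathscr{L}_f^*\nu_f = \lambda_f\nu_f$ with $\lambda_f = \int \mathscr{L}_f(\mathbf 1)\,d\nu_f > 0$, which is assertion (ii) of the statement.

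The technical heart of the elementary part is the bounded-distortion estimate: for every $n$, every finite word $w\in\{0,1\}^n$, and all $x,y\in\Omega$,
\[
\Bigl| \sum_{i=0}^{n-1} f(\sigma^i(wx)) - \sum_{i=0}^{n-1} f(\sigma^i(wy)) \Bigr| \;\le\; \mathrm{Hol}(f)\sum_{k\ge 1} 2^{-\gamma k} \;=:\; C_f,
\]
with the right-hand side improved by a factor $2^{-\gamma m}$ whenever $x$ and $y$ agree on their first $m$ coordinates; here the finiteness $\sum_k 2^{-\gamma k}<\infty$ for $\gamma>0$ is what makes the argument work. This yields $e^{-C_f}\le \mathscr{L}_f^n(\mathbf 1)(x)/\mathscr{L}_f^n(\mathbf 1)(y)\le e^{C_f}$, a uniform $\gamma$-H\"older bound for $\lambda_f^{-n}\mathscr{L}_f^n(\mathbf 1)$, and, after checking that $\|\lambda_f^{-n}\mathscr{L}_f^n(\mathbf 1)\|_\infty$ is bounded above and away from $0$, that the Ces\`aro means $h_n=\frac1n\sum_{k=0}^{n-1}\lambda_f^{-k}\mathscr{L}_f^k(\mathbf 1)$ form an equicontinuous, uniformly positive and bounded family. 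By Arzel\`a--Ascoli a subsequence converges to some $h_f\in C^\gamma(\Omega)$ with $h_f>0$ and $\mathscr{L}_f h_f = \lambda_f h_f$; normalize so that $\int h_f\,d\nu_f=1$.

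Next I would set $\tilde f = f + \log h_f - \log(h_f\circ\sigma) - \log\lambda_f\in C^\gamma(\Omega)$, so that $L:=\mathscr{L}_{\tilde f}$ is normalized, $L(\mathbf 1)=\mathbf 1$ and $L^*\nu_f=\nu_f$, and $\mathscr{L}_f^n(\varphi)=\lambda_f^n\,h_f\cdot L^n(\varphi/h_f)$. It then remains to prove that $L^n\psi\to\int\psi\,d\nu_f$ uniformly, with exponential rate for $\psi\in C^\gamma(\Omega)$, and that $\nu_f$ is the unique probability eigenmeasure. I would obtain this by the Birkhoff cone method: the cone of strictly positive functions $\psi$ satisfying $\psi(x)\le\psi(y)\exp\!\bigl(a\,d(x,y)^\gamma\bigr)$ for a suitable $a$ is mapped by $L$ into a subcone of finite diameter in the associated Hilbert projective metric, so $L$ is a uniform contraction there; this gives the exponential convergence on $C^\gamma(\Omega)$, hence simplicity of $\lambda_f$ and the spectral-gap claim (i). Density of $C^\gamma(\Omega)$ in $C(\Omega)$ upgrades the convergence to all $\varphi\in C(\Omega)$, which is (iii), and uniqueness of $\nu_f$ follows by evaluating (iii) against an arbitrary probability eigenmeasure. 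Undoing the conjugation by $h_f$ returns the assertions for $\mathscr{L}_f$ and $\lambda_f$.

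I expect the main obstacle to be step (iv): steps (i)--(iii) are soft, but the quantitative estimate that $L$ contracts the cone (equivalently, a Lasota--Yorke inequality $\mathrm{Hol}(L^n\psi)\le\theta^n\mathrm{Hol}(\psi)+C\|\psi\|_\infty$ together with the analysis of the peripheral spectrum using topological mixing of $\sigma$) is where the H\"older regularity and the summability of $2^{-\gamma k}$ must be used in an essential way and where the constants require care. As this is a classical result, in practice one simply cites \cite{MR1793194,MR2423393,MR1085356} for the details.
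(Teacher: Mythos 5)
The paper never proves Theorem A: it is stated as classical background, with the proof deferred to \cite{MR1793194,MR2423393,MR1085356}. Your outline faithfully reproduces the standard argument found in exactly those sources (Schauder--Tychonoff fixed point for the eigenmeasure, bounded distortion plus Arzel\`a--Ascoli Ces\`aro argument for the positive eigenfunction, normalization, and a Birkhoff-cone/Lasota--Yorke estimate for the spectral gap and uniqueness), and you correctly flag that the quantitative contraction step is the only place where real work remains, resolving it by citation just as the paper itself does.
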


Whereas in the classical theory of the Ruelle operator, $f$ is some fixed H\"older, 
Walters \cite{MR2342978,MR1841880} or Bowen \cite{MR2423393,MR1783787} potential, 
here we are interested in obtaining almost certain results for 
random potentials. 
\begin{definition}[Random Potentials]\label{def-random-potential}
Let $(\Psi,\mathcal{F},\mathbb{P})$ be a complete probability space.
A random potential is a map $X:\Psi\times\Omega\to\mathbb{R}$
satisfying for any fixed $x\in\Omega$ that $\psi\mapsto X(\psi,x)$
is $\mathcal{F}$-measurable.
A random potential $X$ is said to be a {\bf continuous random potential}
if $x\mapsto X(\psi,x)\in C(\Omega)$ for $\mathbb{P}$-almost all 
$\psi\in\Psi$.
\end{definition}

Although some of our results could be stated for a large 
class of random potentials, we focus on the 
random potentials of Brownian type
(see the next section for the definition of such potentials). 

When moving into the realm of random potentials the basic theory
of the Thermodynamic Formalism needs to be reconstructed from scratch.
In \cite{MR958289} random potentials were considered and this is the 
setting of our work. Latter, more general settings were introduced.
The theory was extended to cover countable random Markov subshifts
of finite type, allowing randomness to be considered also 
in the adjacency matrix, we refer the reader to 
\cite{MR1336700,MR1402413,MR2410952,MR958289,MR1739595,MR2399927,MR2186245,MR2866474},
for this more general setting. 

As mentioned above the randomness here is considered exclusively in 
the potential. Since this is simpler setting, 
for the reader's convenience  we proved  
here, in full details for the Brownian potential, that important quantities such 
as the spectral radius, the topological pressure 
and the main eigenvalue of the Ruelle operator can be associated to random variables 
with finite first moment in Wiener spaces. 
At the end of Section \ref{sec-brown-pot} we explain how to  
obtain similar results for Brownian potential 
using the general setting considered in \cite{MR2399927,MR2186245}.

Working with random potentials, we are 
led to consider questions similar in spirit to 
the ones we ask when studying disordered systems in 
Statistical Mechanics, for instance the existence of annealed and 
quenched free energy and pressure, respectively, 
see \cite{MR1026102}. 
Our approach provides a new way to obtain the existence and finiteness 
of quenched topological pressure defined by
\[
P^{\mathrm{quenched}}(X)
\equiv
\mathbb{E}\left[\sup_{\mu\in \mathscr{P}_{\sigma}(\Omega)}
\left\{h(\mu)+\int_{\Omega} X\, d\mu \right\}\right],
\]
(note that the existence of the rhs above is itself a deep question) as well as the identity 
$
P^{\mathrm{quenched}}(X)
=
\mathbb{E}\left[\lim_{n\to\infty} 
n^{-1}\log \mathscr{L}^n_{X}(1)(\sigma^n (x))\right],
\ \forall x\in\Omega
$
for a large class of continuous random potential $X$
not invoking the classical replica trick. 
On the other hand, despite the progress achieved with this method, 
it has the disadvantage of not being able
to exhibit explicitly these quantities. 

Our main results are Theorems 
\ref{teo-banacho-iso}, \ref{prop-estimativa-valor-esperado-lambda},
the expression \eqref{eq-expressao-lambdaB}, and 
the bounds for the quenched pressure in Section \ref{sec-quenched-pressure}.

The paper
is organized as follows. 
In Section \ref{sec-brown-pot} we introduce the so-called Brownian potential and 
we prove that the main eigenvalue for the random Ruelle operator associated to the Brownian potential almost surely exists. 
We also prove in this section that phase transition for this random potential, in the sense 
of the Definition \ref{def-transicao-fase}, is almost surely absent. 
In Section \ref{sec-D-sim-C} and \ref{sec-fun-eq} we prove a Banach isomorphism between $C(\Omega)$
and a certain closed subspace of the Skorokhod space,
denoted by $\mathscr{D}[0,1]$.
With the help of this isomorphism we obtain a stochastic functional
equation satisfied by both the main eigenvalue and its associated
eigenfunction. Then a representation of the main eigenvalue in terms
of the Brownian motion and a limit of a quotient between 
linear combinations of log-normal random variables are also obtained. 
Section \ref{sec-bounds-lambda} is devoted to obtain upper and lower bounds 
for the expected value of the main eigenvalue and 
the main idea is the use of the reflection principle 
of the Brownian motion to overcome the complex 
combinatorial problem arising from the representation obtained
in Section \ref{sec-fun-eq}. In Section \ref{sec-quenched-pressure} we use the results of Sections \ref{sec-brown-pot}-\ref{sec-bounds-lambda} 
to give a proof of existence and finiteness of the quenched
topological pressure. In Section 7 we briefly discuss some natural  
extensions of our results, as to more general alphabets and to
other random potentials. 
We also explain the difficulties in improving our estimates and 
obtaining the existence of the annealed pressure.

\section{Brownian Type Potentials} \label{sec-brown-pot}

Let $\{B_t: t\in [0,1]\}$ be the standard one-dimensional Brownian motion,
i.e., a stochastic process with continuous 
paths almost surely, stationary independent increments and 
$B_t\sim \mathcal{N}(0,t)$ for each $t\geq 0$, see \cite{MR1121940,MR2574430}.

The idea is to use the almost certain continuity of its paths  
$t\mapsto B_t$ to define an almost certain continuous 
random potential on the symbolic space $\Omega=\{0,1\}^{\mathbb{N}}$.

The construction of this random potential is as follows.
We consider the function (abusing notation) $t:\Omega\to [0,1]$ 
from the symbolic space to the closed unit interval $[0,1]$, 
defined for each $x\in\Omega$ 
by the expression $t(x)= \sum_{n\geq 1}2^{-n}x_n$.
The {\bf Brownian potential} is the 
random mapping from $\Omega$ to $\mathbb{R}$
given by
\[
\Omega\ni x\mapsto B_{t(x)}.
\] 

\begin{figure}[h!]
\centering
\includegraphics[width=0.5\linewidth]{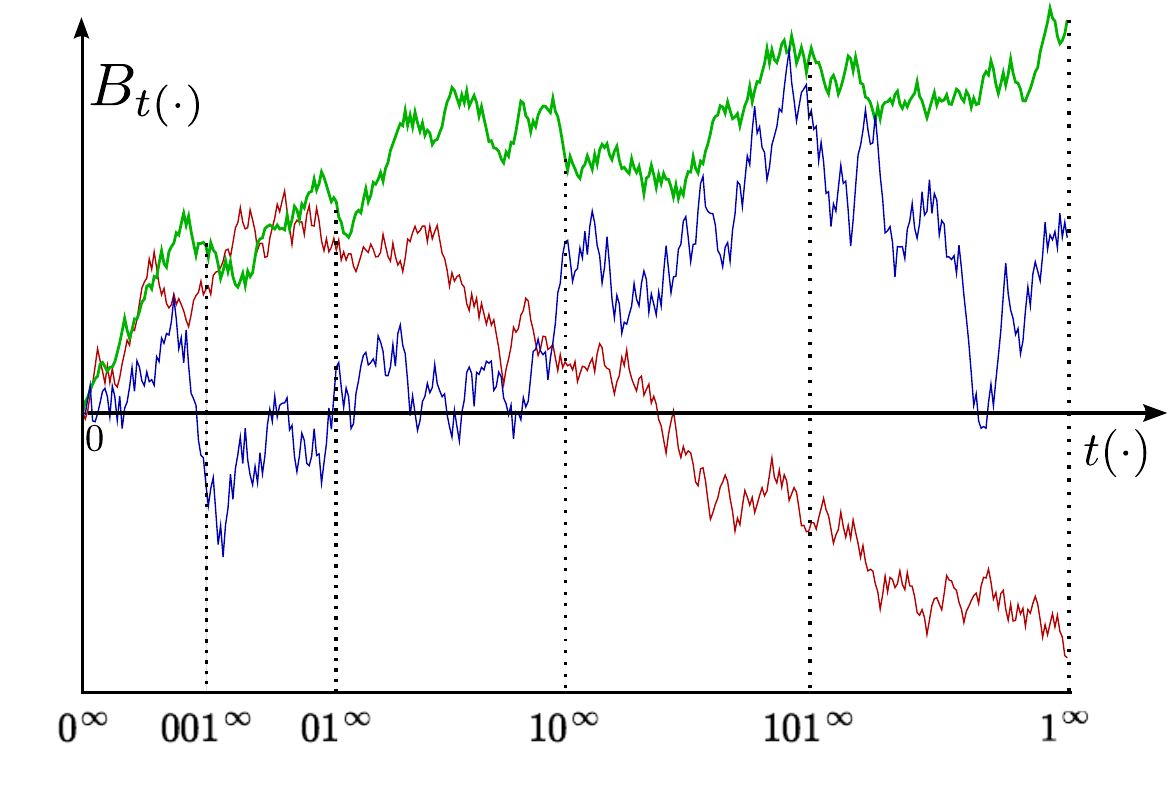}
\caption[Samples]{Samples of the Brownian potential. \\
$a_1\dots a_n^\infty := (a_1, \dots, a_n, a_n, a_n, \dots)$}
\label{fig:sample-path-brownian-motion}
\end{figure}

From the very definition of the Brownian motion follows that the
Brownian potential is almost certain a continuous potential
over the symbolic space $\Omega$. Therefore we have almost certain a well 
defined Ruelle operator 
$\mathscr{L}_{B_{t(\cdot)}}$ sending $C(\Omega)$ to itself.
This ``random" operator takes a continuous function $\varphi$ to
another continuous function $\mathscr{L}_{B_{t(\cdot)}}(\varphi)$ 
that is given for each $x\in\Omega$ by 
\begin{align}\label{def-oper-ruelle-brown}
\mathscr{L}_{B_{t(\cdot)}}(\varphi)(x)
=
\sum_{y\in\Omega: \sigma(y)=x}
\exp(B_{t(y)})\varphi(y).
\end{align}

We observe that rhs of \eqref{def-oper-ruelle-brown} let it clear that 
$\mathscr{L}_{B_{t(\cdot)}}(\varphi)(x)$ is a random 
variable, for any choice of $x\in\Omega$ and $\varphi\in C(\Omega)$. 

\begin{proposition}\label{prop-lambda-B-random-variable}
The spectral radius $\lambda_{B}$ of the random Ruelle operator
$\mathscr{L}_{B_{t(\cdot)}}$ is a non-negative random variable.
\end{proposition}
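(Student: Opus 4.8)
The plan is to treat the two assertions separately. Non-negativity is essentially free: on the full-$\mathbb{P}$-measure event $A\subset\Psi$ on which $x\mapsto B_{t(x)}(\psi)$ is continuous, $\mathscr{L}_{B_{t(\cdot)}}$ is a bounded linear operator on the Banach space $C(\Omega)$, and the spectral radius of any bounded operator is a non-negative real number, being the limit $\lim_{n}\|\mathscr{L}^{n}_{B_{t(\cdot)}}\|^{1/n}$ of non-negative quantities (equivalently $\sup\{|\mu|:\mu\in\sigma(\mathscr{L}_{B_{t(\cdot)}})\}$). On $\Psi\setminus A$ we simply set $\lambda_{B}:=0$; since $(\Psi,\mathcal{F},\mathbb{P})$ is complete this does not affect measurability. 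So the real content of the statement is the $\mathcal{F}$-measurability of $\psi\mapsto\lambda_{B}(\psi)$.

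For measurability I would first reduce $\lambda_{B}$ to a tractable expression. Recall that for a positive operator $T$ on $C(\Omega)$ one has $\|T\|_{C(\Omega)\to C(\Omega)}=\|T(1)\|_{\infty}$, because $\|f\|_{\infty}\le 1$ forces $|Tf|\le T|f|\le T(1)$ pointwise. Applying this to the (again positive) iterates $T=\mathscr{L}^{n}_{B_{t(\cdot)}}$ and using Gelfand's spectral radius formula (the limit below exists and equals the infimum by submultiplicativity of $a_{n}:=\|\mathscr{L}^{n}_{B_{t(\cdot)}}(1)\|_{\infty}$), we obtain on $A$ that
\[
\lambda_{B}=\lim_{n\to\infty}\big\|\mathscr{L}^{n}_{B_{t(\cdot)}}(1)\big\|_{\infty}^{1/n}=\inf_{n\ge 1}\big\|\mathscr{L}^{n}_{B_{t(\cdot)}}(1)\big\|_{\infty}^{1/n}.
\]
(Alternatively one may invoke item (iii) of Theorem \ref{teo-rpf-holder-case} with $\varphi\equiv 1$, which applies because $|t(x)-t(y)|\le 2\,d(x,y)$ makes $x\mapsto B_{t(x)}$ almost surely $\gamma$-H\"older for every $\gamma<1/2$, and because $\int_{\Omega}1\,d\nu_{B}=1$; it yields the same limit.) Since a countable infimum of measurable functions is measurable, it then suffices to prove that $\psi\mapsto\|\mathscr{L}^{n}_{B_{t(\cdot)}}(1)\|_{\infty}$ is $\mathcal{F}$-measurable for each fixed $n$.

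To see this, fix $x\in\Omega$. There are exactly $2^{n}$ preimages $y=(a_{1},\dots,a_{n},x_{1},x_{2},\dots)$ with $\sigma^{n}(y)=x$, and
\[
\mathscr{L}^{n}_{B_{t(\cdot)}}(1)(x)=\sum_{y:\,\sigma^{n}(y)=x}\exp\!\Big(\sum_{k=0}^{n-1}B_{t(\sigma^{k}y)}\Big),
\]
a finite sum of exponentials of finite linear combinations of the values of the Brownian motion at the deterministic times $t(\sigma^{k}y)\in[0,1]$. Each evaluation $\psi\mapsto B_{s}(\psi)$ is $\mathcal{F}$-measurable (equivalently, $B$ is a random element of $C[0,1]$ and evaluation at $s$ is continuous), so $\psi\mapsto\mathscr{L}^{n}_{B_{t(\cdot)}}(1)(x)$ is $\mathcal{F}$-measurable for every fixed $x$. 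Finally, on $A$ the function $\mathscr{L}^{n}_{B_{t(\cdot)}}(1)$ is continuous on the compact metric space $\Omega$, so its supremum norm is already attained over any fixed countable dense set $\mathbb{D}\subset\Omega$ (for instance the eventually periodic sequences), whence
\[
\big\|\mathscr{L}^{n}_{B_{t(\cdot)}}(1)\big\|_{\infty}=\sup_{x\in\mathbb{D}}\mathscr{L}^{n}_{B_{t(\cdot)}}(1)(x)
\]
is a countable supremum of $\mathcal{F}$-measurable functions, hence $\mathcal{F}$-measurable. Combining the last three displays shows that $\lambda_{B}$ is a non-negative random variable.

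The only delicate point is the passage from pointwise measurability of $\psi\mapsto\mathscr{L}^{n}_{B_{t(\cdot)}}(1)(x)$ to measurability of the supremum norm; this is resolved by separability of $\Omega$ together with the continuity of $\mathscr{L}^{n}_{B_{t(\cdot)}}(1)$, which holds on the full-measure event $A$. One should also double-check that the times $t(\sigma^{k}y)$ are genuinely deterministic and that the whole argument stays inside $A$, so that the exceptional null set is absorbed by completeness of the probability space.
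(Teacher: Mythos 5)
Your proof is correct and rests on the same backbone as the paper's: Gelfand's spectral radius formula, measurability of $\mathscr{L}^{n}_{B_{t(\cdot)}}(1)(x)$ for fixed $x$ (a finite sum of exponentials of Brownian evaluations at deterministic times), and a countable reduction of the supremum over $\Omega$. Where you genuinely diverge is in how the two suprema are handled, and your treatment is the tighter one. The paper works with the operator-norm version of Gelfand's formula, i.e.\ an uncountable supremum over the unit ball of $C(\Omega)$, and dismisses its measurability with ``argue similarly as above''; you eliminate that supremum altogether by using positivity of the Ruelle operator to write $\|\mathscr{L}^{n}_{B_{t(\cdot)}}\| = \|\mathscr{L}^{n}_{B_{t(\cdot)}}(1)\|_{\infty}$, so that $\lambda_{B}=\inf_{n}\|\mathscr{L}^{n}_{B_{t(\cdot)}}(1)\|_{\infty}^{1/n}$ and only the supremum over $x\in\Omega$ remains. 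Likewise, the paper extracts a maximizing sequence $(x_k)$ realizing the sup norm, which in principle depends on the sample point $\psi$ and therefore leaves a small measurability gap; you instead fix a countable dense subset of $\Omega$ and use continuity of $\mathscr{L}^{n}_{B_{t(\cdot)}}(1)$ on the full-measure event, which is the standard and fully rigorous patch. Together with your explicit handling of the exceptional null set via completeness of $(\Psi,\mathcal{F},\mathbb{P})$ and the observation that the Gelfand limit is an infimum by submultiplicativity, your argument proves the same statement by the same route but closes the two points at which the paper's own proof is terse.
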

\begin{proof}
Of course, the only issue here is about measurability
and we show how to overcome this by using the Gelfand Formula and 
expression \eqref{def-oper-ruelle-brown}.
Indeed, by applying iteratively the expression 
\eqref{def-oper-ruelle-brown} one can see that 
$\mathscr{L}_{B_{t(\cdot)}}^{n}(\varphi)(x)$ is a random variable
for any choice of $\varphi\in C(\Omega)$ and $x\in\Omega$.
For any fixed $n\in\mathbb{N}$ and $\varphi\in C(\Omega)$ 
there exists, by the definition of the supremum, a sequence
$(x_k)$ in $\Omega$ such that 
$\| \mathscr{L}_{B_{t(\cdot)}}^{n}(\varphi) \|_{\infty}
=
\lim_{k\to\infty} |\mathscr{L}_{B_{t(\cdot)}}^{n}(\varphi)(x_k)|$
so proving that 
$\| \mathscr{L}_{B_{t(\cdot)}}^{n}(\varphi) \|_{\infty}$ is a random 
variable.  Since the
Gelfand's formula ensures that the spectral radius 
of $\mathscr{L}_{B_{t(\cdot)}}$ is given by 
\[
\lambda_{B}
=
\lim_{n\to\infty}
(\sup\{ 
\| \mathscr{L}_{B_{t(\cdot)}}^{n}(\varphi) \|_{\infty}
:
\varphi\in C(\Omega)\ \text{and}\ \|\varphi\|_{\infty}=1
\}
)^{\frac{1}{n}}
\]
we can argue similarly as above to conclude that 
$\lambda_B$ is a random variable.
\end{proof}

\begin{proposition}\label{prop-auto-val-func-sao-rv} The random variable $\lambda_{B}$ 
is the main eigenvalue of the random Ruelle operator 
$\mathscr{L}_{B_{t(\cdot)}}$ almost surely. Moreover this operator has a 
positive continuous eigenfunction $h_{B}$ and, additionally,
$h_{B}(x)$ is a random variable, for each $x\in\Omega$.
\end{proposition}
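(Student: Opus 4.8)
The plan is to reduce the almost-sure statements to an application of the classical RPF Theorem (Theorem~\ref{teo-rpf-holder-case}) for a fixed potential, and then to handle the measurability of $h_B(x)$ by a careful choice of a canonical eigenfunction. First I would observe that, by the very definition of the Brownian potential, for $\mathbb{P}$-almost every $\psi$ the path $t\mapsto B_t(\psi)$ is continuous on $[0,1]$, hence $x\mapsto B_{t(x)}(\psi)$ is a continuous potential on $\Omega$. The difficulty is that continuity alone is not enough to invoke Theorem~\ref{teo-rpf-holder-case}, which requires $\gamma$-H\"older regularity. So the key first step is to show that a Brownian path is, almost surely, $\gamma$-H\"older on $[0,1]$ for every $\gamma<1/2$ (this is L\'evy's classical modulus-of-continuity result), and to transfer this to $x\mapsto B_{t(x)}$: since $|t(x)-t(y)|\le 2^{-N}\cdot C = C\, d(x,y)$ when $x,y$ first differ at coordinate $N$, the composition with the Lipschitz map $t(\cdot)$ preserves $\gamma$-H\"older continuity, so $B_{t(\cdot)}\in C^{\gamma}(\Omega)$ almost surely for each fixed $\gamma<1/2$. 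This is exactly why the paper restricts to H\"older exponents $\gamma<1/2$.

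Given this, for $\mathbb{P}$-a.e.\ $\psi$ the potential $f_\psi = B_{t(\cdot)}(\psi)$ lies in $C^\gamma(\Omega)$, so Theorem~\ref{teo-rpf-holder-case} applies and yields a simple, maximal-modulus, strictly positive eigenvalue $\lambda_{f_\psi}$ with a strictly positive eigenfunction. By Proposition~\ref{prop-lambda-B-random-variable} the spectral radius $\lambda_B$ is measurable, and since on a full-measure set it coincides with the RPF eigenvalue of maximal modulus (being the spectral radius of a positive operator on a Banach space of continuous functions, it is itself an eigenvalue and equals $\lambda_{f_\psi}$), we conclude $\lambda_B$ is almost surely the main eigenvalue. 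The existence of a positive continuous eigenfunction $h_B$ is then immediate from RPF; what remains, and what I expect to be the main obstacle, is the measurability claim: for each fixed $x\in\Omega$, $\psi\mapsto h_B(\psi,x)$ must be $\mathcal{F}$-measurable.

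To handle this I would not try to prove measurability of an abstract spectral projection directly, but instead pin down a canonical normalization and exhibit $h_B$ as an explicit pointwise limit. By item iii) of Theorem~\ref{teo-rpf-holder-case}, for any fixed $\varphi\in C(\Omega)$ with $\int \varphi\, d\nu_{f_\psi}\neq 0$ one has $\lambda_{f_\psi}^{-n}\mathscr{L}^n_{f_\psi}(\varphi)\to h_{f_\psi}\int\varphi\, d\nu_{f_\psi}$ uniformly; taking $\varphi\equiv 1$ gives $h_{f_\psi}$ up to a positive multiplicative constant, so after normalizing — say requiring $h_B(x_0)=1$ for a fixed reference point $x_0$, or $\int h_B\, d\nu_B = 1$ — we get, for each $x$,
\[
h_B(\psi,x)
=
\lim_{n\to\infty}
\frac{\mathscr{L}^n_{B_{t(\cdot)}}(1)(x)}{\mathscr{L}^n_{B_{t(\cdot)}}(1)(x_0)}.
\]
Each term on the right is, by the iterated form of \eqref{def-oper-ruelle-brown}, a finite sum of products of functions of the Brownian increments, hence a random variable (this is exactly the observation already used in the proof of Proposition~\ref{prop-lambda-B-random-variable}); as a pointwise limit of random variables, $h_B(\cdot,x)$ is then $\mathcal{F}$-measurable. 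Positivity and continuity of $x\mapsto h_B(\psi,x)$ come from RPF for each fixed $\psi$. The one subtlety to address carefully is that the normalizing denominator $\mathscr{L}^n_{B_{t(\cdot)}}(1)(x_0)$ is almost surely nonzero for all large $n$ (true since the summands are strictly positive), and that the limit genuinely exists off a single $\mathbb{P}$-null set independent of $x$, which follows from the uniform convergence in item iii).
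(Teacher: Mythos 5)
Your proposal is correct and follows essentially the same route as the paper: almost-sure $\gamma$-H\"older regularity of $x\mapsto B_{t(x)}$ (via the H\"older continuity of Brownian paths and $|t(x)-t(y)|\le d(x,y)$) to invoke Theorem~\ref{teo-rpf-holder-case}, so that the spectral radius $\lambda_B$ is a.s.\ the main eigenvalue, and then measurability of the eigenfunction via a pointwise limit of measurable quantities coming from item~iii). The only cosmetic difference is the normalization: the paper writes $h_B(x)=\lim_n \lambda_B^{-n}\mathscr{L}^n_{B_{t(\cdot)}}(1)(x)$ (using the already-established measurability of $\lambda_B$), while you use the ratio $\mathscr{L}^n_{B_{t(\cdot)}}(1)(x)/\mathscr{L}^n_{B_{t(\cdot)}}(1)(x_0)$, which yields the eigenfunction normalized at $x_0$; both arguments are valid.
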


\begin{proof}
A typical trajectory of the Brownian motion 
in the unit interval $[0,1]$ is $\gamma$-H\"older continuous 
function for any $\gamma<1/2$, see \cite{MR1121940,MR2574430}.
A simple computation shows that 
$|t(x)-t(y)|\leq d(x,y)$, where $d(x,y)=2^{-N}$ and
$N = \inf\{n: x_j=y_j\ \text{for}\ 1\leq j< n\ \text{and}\ x_n\neq y_n \}$.
Therefore almost certain we have 
\[
|B_{t(x)}-B_{t(y)}|
\leq 
\mathrm{const.}|t(x)-t(y)|^{\gamma}
\leq 
\mathrm{const.}
(d(x,y))^{\gamma}.
\]
So our random potentials $x\mapsto B_{t(x)}$
are almost certain $\gamma$-H\"older continuous for any
$\gamma<1/2$.
From the classical Ruelle-Perron-Fr\"obenius
Theorem follows that $\lambda_{B}$, 
the random spectral radius of $\mathscr{L}_{B_{t(\cdot)}}$, 
is actually the main eigenvalue and it has geometric multiplicity 
one almost surely. From item iii) of Theorem \ref{teo-rpf-holder-case}
we have for each $x\in\Omega$ that  
\[
\lambda_{B}^{-n}\cdot \mathscr{L}^{n}_{B_{t(\cdot)} }(1)(x) 
\to
h(x).
\]
Since the lhs above is a random variable it follows that
$h(x)$ is also a random variable.
\end{proof}

Before proceed we recall some elementary facts about dual 
of the Ruelle operator associated to a deterministic 
general continuous potential. 
As we mentioned before $(\Omega,d)$ is a compact metric space
so one can apply the Riesz-Markov Theorem to prove that $C^*(\Omega)$
is isomorphic to $\mathscr{M}_s(\Omega)$, the space of all signed Radon measures.
Therefore we can define $\mathscr{L}^*_{f}$, the dual of the Ruelle operator,
as the unique continuous map from $\mathscr{M}_s(\Omega)$
to itself satisfying for each $\gamma\in\mathscr{M}_s(\Omega)$
the following identity
\begin{align}\label{eq-dualidade}
\int_{\Omega} \mathscr{L}_{f}(\varphi)\, d\gamma
=
\int_{\Omega} \varphi\, d(\mathscr{L}^*_{f}\gamma)
\qquad \forall \varphi\in C(\Omega).
\end{align}

From the positivity of $\mathscr{L}_f$ follows that
the map
$
\gamma
\mapsto
\mathscr{L}^*_{f}(\gamma)/
\mathscr{L}^*_{f}(\gamma)(1)
$
sends the space of all Borel probability measures
$\mathscr{P}(\Omega)$ to itself.
Since $\mathscr{P}(\Omega)$ is convex set and
compact in the weak topology (which is Hausdorff in this case)
and the mapping
$
\gamma
\mapsto
\mathscr{L}^*_{f}(\gamma)/
\mathscr{L}^*_{f}(\gamma)(1)
$
is continuous, the Tychonoff-Schauder Theorem ensures the existence of
at least one Borel probability measure $\nu_f$ such that
$
\mathscr{L}^*_{f}(\nu_f) = \mathscr{L}^*_{f}(\nu_{f})(1) \nu_f.
$
The eigenvalue
$\lambda_f\equiv \mathscr{L}^*_{f}(\nu_f)(1)$ is positive and
equals to the spectral radius of $\mathscr{L}_{f}$ and therefore
independent of the choice of the fixed point $\nu_f$, see
\cite{MR1793194,MR2423393,MR1085356}. 

If $X$ is a continuous random potential, then 
the Ruelle operator $\mathscr{L}_{X}$ sends
 $C(\Omega)$ to itself almost surely. We claim that for any
continuous random potential $X$ the spectral radius 
of the operator $\mathscr{L}_{X}$, denoted by $\lambda_{X}$
is a random variable. In fact, the claim follows
from the arguments given in  
the proof of Proposition \ref{prop-lambda-B-random-variable} 
replacing $B_{t(\cdot)}$ by $X$. 

\begin{definition}[Phase Transition]\label{def-transicao-fase}
Let $X$ be a continuous random potential and 
$\lambda_X$ the random spectral radius of $\mathscr{L}_{X}$. 
We say that the random potential $X$ does not present phase 
transition if the eigenvalue problem $\mathscr{L}_{X}^{*}\nu = \lambda_{X}\nu$ 
has  a unique solution in $\mathscr{P}(\Omega)$ almost surely.
Otherwise, we say that $X$ presents phase transition.
\end{definition}

\begin{corollary}[Absence of Phase Transition for Brownian Potentials]
Almost surely the dual of the Ruelle operator $\mathscr{L}^{*}_{B_{t(\cdot)}}$
has one eigenprobability associated to $\lambda_{B}$.
\end{corollary}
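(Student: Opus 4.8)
The plan is to reduce the statement to a pathwise application of the classical Ruelle--Perron--Fr\"obenius Theorem. As was established in the proof of Proposition~\ref{prop-auto-val-func-sao-rv}, there is an event $\Psi_0\in\mathcal{F}$ with $\mathbb{P}(\Psi_0)=1$ on which the path $t\mapsto B_t(\psi)$ is $\gamma$-H\"older on $[0,1]$ for some fixed $\gamma\in(0,1/2)$, so that $f:=B_{t(\cdot)}(\psi)$ lies in $C^{\gamma}(\Omega)$ for every $\psi\in\Psi_0$. I would fix such a $\psi$, apply Theorem~\ref{teo-rpf-holder-case} to $f$ to obtain the simple eigenvalue $\lambda_f$, the strictly positive eigenfunction $h_f$, and a Borel probability measure $\nu_f$ with $\mathscr{L}_f^*\nu_f=\lambda_f\nu_f$, and recall from Proposition~\ref{prop-auto-val-func-sao-rv} that $\lambda_f$ coincides with $\lambda_B(\psi)$. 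Existence of an eigenprobability associated with $\lambda_B$ is then immediate on $\Psi_0$, and the work is entirely in the uniqueness.

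For uniqueness I would argue as follows. Let $\nu'\in\mathscr{P}(\Omega)$ be any measure satisfying $\mathscr{L}_f^*\nu'=\lambda_f\nu'$. Iterating the duality relation \eqref{eq-dualidade} yields
\[
\int_{\Omega}\frac{\mathscr{L}_f^{n}(\varphi)}{\lambda_f^{n}}\,d\nu'=\int_{\Omega}\varphi\,d\nu'
\qquad\text{for every }\varphi\in C(\Omega)\text{ and }n\in\mathbb{N}.
\]
Letting $n\to\infty$ and using the uniform convergence in item~iii) of Theorem~\ref{teo-rpf-holder-case} to pass the limit under the integral against $\nu'$, the left-hand side tends to $\big(\int_{\Omega}h_f\,d\nu'\big)\big(\int_{\Omega}\varphi\,d\nu_f\big)$, whence $\int_{\Omega}\varphi\,d\nu'=\big(\int_{\Omega}h_f\,d\nu'\big)\int_{\Omega}\varphi\,d\nu_f$ for all $\varphi\in C(\Omega)$. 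Choosing $\varphi\equiv 1$ forces $\int_{\Omega}h_f\,d\nu'=1$, and hence $\int_{\Omega}\varphi\,d\nu'=\int_{\Omega}\varphi\,d\nu_f$ for all continuous $\varphi$, i.e.\ $\nu'=\nu_f$ by the Riesz--Markov Theorem.

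Putting the two steps together, on the full-measure event $\Psi_0$ the eigenvalue problem $\mathscr{L}_{B_{t(\cdot)}}^*\nu=\lambda_B\nu$ admits exactly one solution in $\mathscr{P}(\Omega)$; since $\mathbb{P}(\Psi_0)=1$ this holds almost surely, which is the assertion, and in the language of Definition~\ref{def-transicao-fase} it says that the Brownian potential does not present phase transition. I do not expect a genuine obstacle here: the argument is a routine specialization of the RPF Theorem, and the only point that really uses the earlier results is the identification of $\lambda_B$ — introduced as a spectral radius through Gelfand's formula in Proposition~\ref{prop-lambda-B-random-variable} — with the RPF main eigenvalue $\lambda_f$, which was already carried out in Proposition~\ref{prop-auto-val-func-sao-rv}.
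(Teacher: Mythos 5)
Your proposal is correct and follows essentially the same route as the paper: the almost sure $\gamma$-H\"older continuity of $x\mapsto B_{t(x)}$ reduces everything to a pathwise application of Theorem~\ref{teo-rpf-holder-case}, with $\lambda_B$ identified with the RPF eigenvalue via Proposition~\ref{prop-auto-val-func-sao-rv}. The only difference is that the paper simply cites the uniqueness of $\nu_f$ contained in item~ii) of the RPF Theorem, whereas you re-derive it from item~iii) by iterating the duality relation \eqref{eq-dualidade}; that extra argument is valid but not strictly needed.
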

\begin{proof}
From the Theorem \ref{prop-lambda-B-random-variable} follows that 
$\lambda_B$ is both the spectral radius and the main eigenvalue of 
$\mathscr{L}_{B_{t(\cdot)}}$. So the corollary follows from the almost certain $\gamma$-H\"older 
continuity of the mapping $\Omega\ni x\mapsto B_{t(x)}$, 
obtained in the proof of Theorem \ref{prop-lambda-B-random-variable},  
together with the item ii) of Theorem \ref{teo-rpf-holder-case}.
\end{proof}

\section{$\pmb{C(\Omega)\simeq \mathscr{D}[0,1]}$ } \label{sec-D-sim-C}

Let us denote by $\mathscr{D}$ the subset of the unit 
interval $[0,1]$ such that all of its points have exactly two 
distinct binary expansion representations, i.e.,  
\[
\mathscr{D}\equiv \{s\in[0,1]: \#t^{-1}(s)=2 \}.
\]

If $t(x)\in \mathscr{D}$ then there is a unique
$y\in\Omega\setminus\{x\}$ such that $t(x)=t(y)$.
Moreover such $x$ and $y$ 
are always comparable in the lexicographic order.
When $x$ is smaller than $y$ in this order 
we indicate this by using the notation $x\prec_{\mathrm{Lex}}y$.

We now introduce a proper subset of the Skorokhod space that 
will play a fundamental role in this section.
More details on the Skorokhod space can be found in \cite{MR1700749}.
The functions belonging to the subspace mentioned above are c\`adl\`ag on 
the unit interval but only allowed to jump
on the set $\mathscr{D}$. We use the convenient notation 
$\mathscr{D}[0,1]$ to denote this subspace and
it is characterized as follows
\[
\mathscr{D}[0,1]
\equiv
\left\{
F:[0,1]\to\mathbb{R}:\ 
\begin{array}{c}
F\ \text{is continuous in}\ [0,1]\setminus\mathscr{D} \ \text{and}\ \forall t\in\mathscr{D}
\\
\text{the left limit}\  \lim_{s\uparrow t} F(s)\ \text{exists};
\\
\text{the right limit}\ \lim_{s\downarrow t} F(s)=F(t).
\end{array}
\right\}.
\]

We want to relate this space to $C(\Omega)$ and to this end 
we introduce the following linear operator 
$\Theta: \mathscr{D}[0,1]\to C(\Omega)$
which sends a function $F\in \mathscr{D}[0,1]$ to
a function $f\in C(\Omega)$ whose its definition is 
\begin{itemize}
\item $f(x)=F(t(x))$ if $x\notin t^{-1}(\mathscr{D})$ and
\item if $x\in t^{-1}(\mathscr{D})$ then there exists a unique 
$y\in\Omega\setminus\{x\}$ 
such that $t(x)=t(y)$. If $x\prec_{\mathrm{Lex}} y$
we put $f(x)=\lim_{s\uparrow t(x)} F(s)$ and 
$f(y) = \lim_{s\downarrow t(y)} F(s)= F(t(y))$.
\end{itemize}

The linearity of $\Theta$ is obvious, but the statement 
$\Theta(F)\in C(\Omega)$ whenever $F\in \mathscr{D}[0,1]$
requires a proof and this is the content of the next proposition.

\begin{proposition}
For every $F\in \mathscr{D}[0,1]$ we have that
$\Theta(F) \in C(\Omega)$.
\end{proposition}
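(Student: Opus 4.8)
\medskip
\noindent\textit{Plan of proof.} Since $\Omega$ is metrizable, it suffices to verify that $f:=\Theta(F)$ is continuous at each point $x\in\Omega$, and I would do this by a direct $\varepsilon$--$\delta$ argument. Two elementary facts about $t$ are used throughout. First, $t$ is Lipschitz: if $x$ and $z$ first disagree at the coordinate $N$, then $|t(x)-t(z)|\le 2^{-N+1}=2\,d(x,z)$, so in particular $t(z)\to t(x)$ as $z\to x$. Second, and more delicate, is the sign statement: writing $t(z)-t(x)=(z_N-x_N)2^{-N}+\sum_{i>N}(z_i-x_i)2^{-i}$, the leading term dominates in absolute value, so $t(z)\ge t(x)$ when $x\prec_{\mathrm{Lex}}z$ and $t(z)\le t(x)$ when $z\prec_{\mathrm{Lex}}x$, with equality $t(z)=t(x)$ (for $z\neq x$) forcing $\{x,z\}=t^{-1}(t(x))$. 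I would combine this with the structural description of the two preimages of a dyadic point: if $t^{-1}(s)=\{x,y\}$ with $x\prec_{\mathrm{Lex}}y$ and $m$ the first index where they differ, then $x_m=0$, $y_m=1$, while $x_i=1$ for all $i>m$ and $y_i=0$ for all $i>m$.

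With these tools the proof splits according to $s:=t(x)$. If $s\in[0,1]\setminus\mathscr{D}$, then $x$ is the unique preimage of $s$, $f(x)=F(s)$, and $F$ is continuous at $s$ within $[0,1]$ (two-sided, or one-sided when $s\in\{0,1\}$). Given $\varepsilon>0$ I pick $\eta>0$ with $|F(u)-F(s)|<\varepsilon$ for all $u\in[0,1]$ with $|u-s|<\eta$, and then $\delta>0$ with $2\delta<\eta$; for $d(x,z)<\delta$ one has $|t(z)-s|<\eta$, and $f(z)$ equals either $F(t(z))$ or a left limit $\lim_{v\uparrow t(z)}F(v)$, both of which lie within $\varepsilon$ of $F(s)$ (in the second case because the relevant $v$ slightly below $t(z)$ still satisfy $|v-s|<\eta$). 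If $s\in\mathscr{D}$, write $t^{-1}(s)=\{x,y\}$ with $m$ as above. When $x\prec_{\mathrm{Lex}}y$, the definition gives $f(x)=\lim_{u\uparrow s}F(u)$; I would take $\delta\le 2^{-m}$, so that any $z\neq x$ with $d(x,z)<\delta$ first disagrees with $x$ at some $N>m$, where $x_N=1$, forcing $z_N=0$, hence $z\prec_{\mathrm{Lex}}x\prec_{\mathrm{Lex}}y$; therefore $t(z)\le s$ and, since $z\notin\{x,y\}=t^{-1}(s)$, in fact $t(z)<s$ with $|t(z)-s|\le 2\delta$. Shrinking $\delta$ so that $2\delta<\eta$, with $\eta$ coming from $\lim_{u\uparrow s}F(u)$, the same bookkeeping as before (values and left limits of $F$ at points of $(s-\eta,s)$ are within $\varepsilon$ of $\lim_{u\uparrow s}F(u)$) gives $|f(z)-f(x)|\le\varepsilon$. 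The case $y\prec_{\mathrm{Lex}}x$ is symmetric: now $f(x)=\lim_{u\downarrow s}F(u)=F(s)$ by right-continuity, nearby $z\neq x$ satisfy $t(z)>s$ with $t(z)\to s$, and one uses right-continuity of $F$ at $s$ together with the fact that the left limit of $F$ at a point just above $s$ is taken over arguments still lying in $(s,s+\eta)$.

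The only step requiring care beyond routine estimation — and the one I expect to be the main obstacle — is the interplay between the lexicographic order and one-sidedness: one must see that a sequence in $\Omega$ converging to the $\prec_{\mathrm{Lex}}$-smaller (resp. larger) preimage of a dyadic $s$ is sent by $t$ to a sequence approaching $s$ strictly from the left (resp. right), which is precisely what makes the left/right choice in the definition of $\Theta$ the correct one. A secondary, easily overlooked point is that for a nearby ``bad'' point $z$ (one with $t(z)\in\mathscr{D}$) the value $f(z)$ may itself be a one-sided limit of $F$ rather than a value of $F$; this is harmless because one-sided limits of $F$ at points close to $s$ on the relevant side are still controlled by the continuity modulus of $F$ at $s$. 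Assembling the three cases yields continuity of $f$ at every $x\in\Omega$, i.e. $\Theta(F)\in C(\Omega)$.
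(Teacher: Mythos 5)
Your proposal is correct and follows essentially the same approach as the paper: a pointwise continuity check split according to whether $t(x)$ is a dyadic point of $\mathscr{D}$ or not, using the Lipschitz bound $|t(x)-t(z)|\le 2\,d(x,z)$, the compatibility of $t$ with the lexicographic order, and the explicit structure of the two preimages of a point of $\mathscr{D}$, so that points near $x$ are mapped by $t$ to the correct side of $t(x)$, matching the left/right choices in the definition of $\Theta$. If anything, your write-up is more complete than the paper's: you treat both lexicographic positions of $x$ explicitly and you address the case where a nearby point $z$ itself lies over a point of $\mathscr{D}$, so that $f(z)$ is a one-sided limit of $F$ rather than a value, a point the paper passes over quickly.
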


\begin{proof}
Let $x\in\Omega$ be an arbitrary point and $F\in \mathscr{D}[0,1]$. 
In order to prove the continuity of $\Theta(F)$ at $x$ we 
split the analysis in two cases: whether $F$ is 
continuous or not in $t(x)$. Suppose that $F$ is continuous
in $t(x)$. In this case independently whether $t(x)$ 
belongs or not to $\mathscr{D}$
we have $\Theta(F)(x)=F(t(x))$. Given $\varepsilon>0$ follows from the 
continuity of $F$ that there is $\delta>0$ such that if $|s-t(x)|<\delta$
then $|F(s)-F(t(x))|<\varepsilon$. From the continuity of $t$ it is possible
to find $\eta>0$ such that if $d(y,x)<\eta$ then $|t(x)-t(y)|<\delta$.  
Suppose by contradiction that $\Theta(F)$ is not continuous at $x$.
So there is a sequence $y_n\in\Omega$ such that $y_n\to x$ but 
$|\Theta(F)(y_n)-\Theta(F)(x)|=|\Theta(F)(y_n)-F(t(x))|\geq \varepsilon$
for all $n\in\mathbb{N}$. Because of the previous observations for $n$
large enough $\Theta(F)(y_n)\neq F(t(y_n))$ otherwise we reach a contradiction. 
So the only possibility is 
$\Theta(F)(y_n) = \lim_{s\uparrow t(y_n)}F(s)$ for all $n\geq n_0$.
Again, we reach a contradiction since for large enough $n$ we have 
$t(y_n)$ close to $t(x)$ and therefore by taking 
$s$ close enough to $t(x)$
we have $F(s)$ close to $F(t(x))$ which is a contradiction with
$|\Theta(F)(y_n)-F(t(x))|\geq \varepsilon$.   
 
Now we assume that $F$ has a jump at $t(x)$. In this case
by the definition of $\mathscr{D}[0,1]$ there exist exactly
one point $y\in\Omega\setminus\{x\}$ such that $t(x)=t(y)$. 
We give the argument for $x\prec_{\mathrm{Lex}} y$ on the 
other case the analysis is similar. Note that
\[
\begin{array}{rl}
y=&(x_1,\ldots,x_n,1,0,0,\ldots)\\
x=&(x_1,\ldots,x_n,0,1,1,\ldots).
\end{array}
\]
From the definition we have 
\[
\Theta(F)(x) = \lim_{s\uparrow t(x)} F(s).
\]
Given $\varepsilon>0$ there is $\delta>0$ such that for all $s$
satisfying $t(x)-\delta<s\leq t(x)$ we have $|F(s)-\Theta(F)(x)|<\varepsilon$.
Note that for every $z\in\Omega$ such that $d(z,x)\leq 2^{n+1}$
we have $t(z)\leq t(x)$. By taking $z$ close enough to $x$ 
and proceed similarly to the previous case 
one can show that $\Theta(F)(z)$ is close to 
$\Theta(F)(x)$, thus proving the continuity of $\Theta(F)$ at $x$.
Since $x$ is arbitrary the proof is complete.  
\end{proof}

\begin{theorem}\label{teo-banacho-iso}
The operator $\Theta$ is a Banach isomorphism (linear isometry) between the
Banach spaces $(\mathscr{D}[0,1],\|\cdot\|_{\infty})$ 
and $(C(\Omega),\|\cdot\|_{\infty})$.
\end{theorem}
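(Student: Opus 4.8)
The plan is to show that $\Theta$ is (i) a linear isometry onto its image and (ii) surjective; since $C(\Omega)$ is complete, this yields a Banach isomorphism automatically. Linearity is already noted, and the previous proposition guarantees $\Theta(F)\in C(\Omega)$, so the content to verify is the isometry and surjectivity. For the isometry $\|\Theta(F)\|_\infty=\|F\|_\infty$, I would argue both inequalities. The bound $\|\Theta(F)\|_\infty\le\|F\|_\infty$ is immediate from the definition of $\Theta$: for each $x$, the value $\Theta(F)(x)$ is either $F(t(x))$ or a one-sided limit of $F$ along $[0,1]$, hence is bounded in modulus by $\sup_{s\in[0,1]}|F(s)|$. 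For the reverse, I would use that $t:\Omega\to[0,1]$ is surjective and that $t^{-1}(\mathscr{D})$ is countable (hence its complement is dense). Given $s\in[0,1]$, pick $x$ with $t(x)=s$; if $s\notin\mathscr{D}$ then $|\Theta(F)(x)|=|F(s)|$, and if $s\in\mathscr{D}$, then $F(s)=\lim_{u\downarrow s}F(u)$ by the right-continuity built into $\mathscr{D}[0,1]$, which by the description of $\Theta$ equals $\Theta(F)(y)$ for the lexicographically larger preimage $y$; in either case $|F(s)|\le\|\Theta(F)\|_\infty$, and passing to the supremum over $s$ gives $\|F\|_\infty\le\|\Theta(F)\|_\infty$. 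Hence $\Theta$ is an isometry, in particular injective with closed range.

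The substantive part is surjectivity: given $f\in C(\Omega)$ I must produce $F\in\mathscr{D}[0,1]$ with $\Theta(F)=f$. The natural candidate is to define, for $s\in[0,1]\setminus\mathscr{D}$, $F(s):=f(x)$ where $x$ is the unique point with $t(x)=s$; and for $s\in\mathscr{D}$, write the two preimages as $x\prec_{\mathrm{Lex}}y$ (so $y=(\ldots,1,0,0,\ldots)$, $x=(\ldots,0,1,1,\ldots)$) and set $F(s):=f(y)$, matching the right-continuity normalization in the definition of $\mathscr{D}[0,1]$. I would then check three things: that $F$ is continuous at every $s\notin\mathscr{D}$, that the left limit $\lim_{u\uparrow s}F(u)$ exists at every $s\in\mathscr{D}$ (and equals $f(x)$ for the smaller preimage $x$), and that the right limit at $s\in\mathscr{D}$ equals $F(s)=f(y)$. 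Each of these follows from the uniform continuity of $f$ on the compact space $\Omega$ together with the Lipschitz-type estimate $|t(z)-t(z')|\le d(z,z')$ and, crucially, its partial converse: if $t(z)$ and $t(z')$ are close and both lie on the same side of a dyadic point (or neither is dyadic), then $z$ and $z'$ agree on a long initial block, hence $d(z,z')$ is small. Concretely, for $s\in\mathscr{D}$ with preimages $x\prec_{\mathrm{Lex}}y$, points $u$ slightly below $s$ have preimages that agree with $x$ on a long prefix, so $F(u)\to f(x)$; points $u$ slightly above $s$ have preimages agreeing with $y$ on a long prefix, so $F(u)\to f(y)=F(s)$. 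This shows $F\in\mathscr{D}[0,1]$, and comparing the definitions of $F$ and $\Theta$ gives $\Theta(F)=f$.

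The main obstacle I anticipate is the bookkeeping around dyadic points in the surjectivity step: one must carefully track which one-sided neighborhood of $s$ corresponds to which initial block in $\Omega$, and handle the two lexicographic cases symmetrically, so that the left/right limit prescriptions in the definition of $\mathscr{D}[0,1]$ line up exactly with the two-valued definition of $\Theta$ on $t^{-1}(\mathscr{D})$. A clean way to organize this is to prove once and for all the elementary lemma: for a non-dyadic $s$ and any $\varepsilon>0$ there is $\delta>0$ such that $|u-s|<\delta$ forces the (unique) preimages of $u$ and $s$ to be within $d<\varepsilon$; and for a dyadic $s$ with preimages $x\prec_{\mathrm{Lex}}y$, there is $\delta>0$ such that $s-\delta<u<s$ forces the preimage of $u$ to be $d$-close to $x$, while $s<u<s+\delta$ forces it to be $d$-close to $y$. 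Once this lemma is in hand, continuity of $f$ does all the remaining work and the identity $\Theta(F)=f$ is a direct comparison of definitions. Finally, I would remark that the inverse $\Theta^{-1}$ is then also bounded (being an isometry), completing the proof that $\Theta$ is a Banach isomorphism.
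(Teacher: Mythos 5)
Your proposal is correct and follows essentially the same route as the paper: the same explicit candidate for $\Theta^{-1}$ (with the right-continuous normalization $F(s)=f(y)$ at dyadic $s$), the same verification that this candidate lies in $\mathscr{D}[0,1]$ via uniform continuity of $f$ and the correspondence between one-sided neighborhoods of dyadic points and long common prefixes in $\Omega$, and the same two-sided isometry estimate. The only cosmetic difference is that you argue the reverse inequality $\|F\|_{\infty}\leq\|\Theta(F)\|_{\infty}$ pointwise in $s$ (using that $F(s)=\Theta(F)(y)$ for the lexicographically larger preimage $y$ when $s\in\mathscr{D}$), which sidesteps the paper's case analysis on whether the supremum of $|F|$ is attained.
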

\begin{proof}
We first construct the inverse operator $\Theta^{-1}$
and then we proceed to show that $\Theta$ is an isometry. 

Let $f\in C(\Omega)$ be an arbitrary continuous function.  
If $s\in [0,1]\setminus\mathscr{D}$ 
we define $F(s)=f(t^{-1}(s))$. If $s\in\mathscr{D}$ then
$t^{-1}(s)=\{x,y\}$ with $x\neq y$. 
If $x\prec_{\mathrm{Lex}}y$ we put $F(s)=f(y)$.
We claim that $F\in \mathscr{D}[0,1]$. Indeed, 
if $s\in [0,1]\setminus\mathscr{D}$, 
then $s$ has a unique binary expansion 
$t^{-1}(s)=(x_1,\ldots,x_n,\ldots)$. 
For each $n\in\mathbb{N}$ and $j=1,\ldots,2^n$ 
we consider the following collection 
of open intervals 
\[
I_n^j=\left( \frac{j-1}{2^n},\frac{j}{2^n} \right)
\]
Note that 
$\mathscr{D}\cup\{0,1\}
= 
\cup_{n\geq 1}\cup_{j=1}^{2^n} \partial I_n^j$.
Note that if $s\in [0,1] \backslash \mathscr{D}$, then $s$ belongs to the interior
of infinitely many 
$I_n^j$'s. 
Given $\varepsilon>0$ there is  
$\delta>0$ such that for all $y\in\Omega$ 
satisfying $d(t^{-1}(s),y)<\delta$
we have $|f(t^{-1}(s))-f(y)|<\varepsilon$.
As mentioned before we can choose $n$ as large as we please
so that $s\in I_n^j$, for some $j=1,\ldots, 2^n$. 
From the definition of $I_n^j$
the first $n$ terms of the binary expansion 
of any $r\in I_n^j$ coincides with $x_1,\ldots,x_n$.
For $n$ such that $2^{-n}<\delta$ we have 
$d(t^{-1}(s),t^{-1}(r))<\delta$ and therefore 
for all $r\in I_n^j$ we have 
$|F(r)-F(s)|\leq \max_{z\in t^{-1}(r)}|f(z)-f(x)|
<\varepsilon$.

Suppose that $t(x)\in\mathscr{D}$ and $\{x,y\}=t^{-1}(x)$.
Without loss of generality we can assume that $x\prec_{\mathrm{Lex}} y$.
For $n\geq n_0$ we have $t(x)\in \partial I_n^j\cap \partial I_n^{j+1} $
for a unique value of $j\in \{1,\ldots,2^n\}$. Every element in $t^{-1}(I_n^j)$
has its first $n$ digits in the binary expansion equals to $x_1,x_2,\ldots,x_n$.
Therefore some small neighborhood of $x$ is sent by $t$ in $I_n^j\cup \{t(x)\}$. 
From the continuity of $f$ at $x$ it follows that the small neighborhood
defined above is sent in a $\varepsilon$ neighborhood of $f(x)$ which
proves the existence of the left limit of $F$ at $t(x)$. 
The right continuity of $F$ at $t(x)\in\mathscr{D}$ 
is proved in similar way.

Notice that the argument in the two previous paragraphs 
also proves that the mapping
\[
\begin{array}{rl}
C(\Omega)\ni f
\mapsto
&F
\\
&
s\mapsto 
\begin{cases}
f(t^{-1}(s)),&\text{if}\ s\in [0,1]\setminus\mathscr{D};
\\
f(y),&\text{if}\ t^{-1}(s)=\{x,y\}\ \text{and}\ x\prec_{\mathrm{Lex}}y.
\end{cases}
\end{array}
\]
is a left inverse of $\Theta$. Of course, $\Theta$ is right 
inverse of this operator so $\Theta$ is a bijection.

Remains to prove that $\Theta$ is an isometry. 
From its very definition we have 
\[
\| \Theta(F) \|_{\infty}
=
\sup_{s\in [0,1]} |\Theta(F)(s)|
\leq \|F\|_{\infty}.
\]
If the supremum above is attained at certain point 
$s\in [0,1]\setminus\mathscr{D}$, take $x \in \Omega$ such that $t(x)$, then we have  
$\|F\|_{\infty}=|F(s)|=|\Theta (F)(x)|\leq \|\Theta(F)\|_{\infty}$.
If the supremum is not attained for some $s\in\mathscr{D}$ the conclusion
is analogous, otherwise 
$\|F\|_{\infty}= \lim_{n\to\infty}\lim_{r\uparrow s_n}|F(r)|
=\lim_{n\to\infty}|\Theta(F)(x_n)|$, where $x_n$ is the smallest point, with respect to 
the lexicographic order, in $t^{-1}(s_n)$ and the proposition follows.
\end{proof}

\begin{remark}[Wiener Spaces]
Recall that the classical Wiener space on the 
closed unit interval $[0,1]$, notation $W[0,1]$, 
is the set of all real continuous functions $F:[0,1]\to \mathbb{R}$
such that $F(0)=0$.
Consider the closed subspace of $C(\Omega)$ given by 
\[
W(\Omega)\equiv
\{f\in C(\Omega): f(0^{\infty})=0,\ f(x)=f(y)\ \text{whenever}\ t(x)=t(y) \},
\] 
where the mapping $t:\Omega\to\mathbb{R}$ was previously defined.
It is easy to see that for every $F\in W[0,1]$ its composition 
with $t$ give us a mapping $F\circ t \in W(\Omega)$. 
Moreover the linear mapping $F\mapsto F\circ t$
is an isometry from $W[0,1]$ to $W(\Omega)$.
\end{remark}

\section{Stochastic Functional Equation for $\pmb{\lambda_{B}}$}\label{sec-fun-eq}

In this section we apply the results above obtained. 
The idea is to obtain a stochastic functional equation 
using the operator $\Theta$ and Proposition \ref{prop-auto-val-func-sao-rv}
to semi-explicit representation of the main eigenvalue $\lambda_{B}$
of the random Ruelle operator $\mathscr{L}_{B_{t(\cdot)}}$. 

Since $h_{B}$ is  a continuous function almost surely,
we can associate to it a {\it c\`adl\`ag} process $\{X_s: 0\leq s\leq 1\}$
so that $\Theta(X_{(\cdot)})=h_{B}$. 
By simple algebraic manipulation we can see that 
$t(0x)=t(x)/2$ and $t(1x)=1/2+t(x)/2$.
By applying the inverse of the 
operator $\Theta$ on both sides of the 
above equation we get the following 
{\it stochastic functional equation} 
\begin{align*}
\exp(B_{\frac{t}{2}})X_{\frac{t}{2}}
+
\exp(B_{\frac{1}{2}+\frac{t}{2}})X_{\frac{1}{2}+\frac{t}{2}}
=
\lambda_{B}X_t.
\end{align*}

Now we have to play with the properties of the Brownian motion
and the Ruelle operator to obtain the law of the main eigenvalue.
Recalling that $B_0=0$ almost surely, so we get from the stochastic 
functional equation that 
\[
X_{0}
+
\exp(B_{\frac{1}{2}})X_{\frac{1}{2}}
=
\lambda_{B}X_0.
\]
Therefore we get by isolating the eigenvalue 
(recall that $X_s>0$ for each $s\in[0,1]$) the following equality a.s.
\begin{align}\label{eq-expressao-lambdaB}
\lambda_{B} 
=
1+\exp(B_{\frac{1}{2}})\frac{X_{ \frac{1}{2} } } {X_0} 
\end{align}

By using again the operator $\Theta$ one can see that 
$X_0=h_{B}(0,0,\ldots)$ and $X_{1/2}=h_B(1,0,0,\ldots)$,
recall  that 
$(0,1,1,\ldots)\prec_{\mathrm{Lex}}(1,0,0,\ldots)$.
From the Theorem \ref{teo-rpf-holder-case} 
we have almost certain
\[
\frac{X_{ \frac{1}{2} } } {X_0}
=
\frac{h_B(1,0,0,\ldots)}{h_{B}(0,0,\ldots)}
=
\lim_{n\to\infty}
\frac{\mathscr{L}^{n}_{B_{t(\cdot)} }(1)(1,0,0,\ldots)}
{\mathscr{L}^{n}_{B_{t(\cdot)} }(1)(0,0,0,\ldots)}.
\]
From where we conclude that 
\[
\lambda_{B} 
=
1+\exp(B_{\frac{1}{2}})
\lim_{n\to\infty}
\frac{\mathscr{L}^{n}_{B_{t(\cdot)} }(1)(1,0,0,\ldots)}
{\mathscr{L}^{n}_{B_{t(\cdot)} }(1)(0,0,0,\ldots)}.
\]

\section{Bounds on the Expected Value of $\pmb{\lambda_{B}}$} \label{sec-bounds-lambda}

In the deterministic case, when the potential is assumed to 
have H\"older, Walters or Bowen regularity, it is easy to obtain 
lower and upper bounds of the main eigenvalue of
the Ruelle operator by using the supremum norm of the potential.
We recall that when the potential have such regularity 
the main eigenvalue do exists and is actually equals
to the spectral radius. 

Since here we are considering random potentials 
lower and upper bounds for the expected values, 
with respect to the Wiener measure, 
of the spectral radius is a natural question to ask.

Opposed to the deterministic case 
the supremum norm of $B_{t(\cdot)}$ by itself does not help 
in finding bounds to the spectral radius. 
The reason is $\sup\{|B_{t(x)}|:x\in\Omega \}$ is an unbounded random 
variable. 
The representation of $\lambda_{B}$ obtained in the last
section is not suitable to take expectations 
and very hard to bound due its complex combinatorial nature.
Therefore a different approach
is needed to bound the expected value of main eigenvalue 
and the idea is based on the reflection principle of the Brownian motion.

\begin{theorem}\label{prop-estimativa-valor-esperado-lambda}
Let $\lambda_B$ be the random variable defining the spectral radius of the 
random Ruelle operator $\mathscr{L}_{B_{t(\cdot)}}$.
Then $\lambda_B$ has finite first moment, with respect to 
the Wiener measure, and moreover 
\[
\exp(1/2)
\leq 
\mathbb{E}[\lambda_{B}] 
\leq 
4 \exp{(1/2)}.
\]
\end{theorem}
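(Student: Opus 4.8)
The plan is to exploit the positivity and monotonicity of the Ruelle operator together with the reflection principle, working directly with the iterates $\mathscr{L}^n_{B_{t(\cdot)}}(1)$ rather than with the representation \eqref{eq-expressao-lambdaB}. First I would record the elementary two-sided estimate valid for any continuous potential $f$, namely $\exp(\inf f)\le \lambda_f^{1/n}\cdot(\text{correction})$; more precisely, iterating \eqref{def-oper-ruelle-brown} gives for every $x\in\Omega$ and every $n$ the pointwise bounds
\[
2^n\exp\Bigl(\inf_{y}\sum_{k=0}^{n-1}B_{t(\sigma^k y)}\Bigr)
\ \le\
\mathscr{L}^n_{B_{t(\cdot)}}(1)(x)
\ \le\
2^n\exp\Bigl(\sup_{y}\sum_{k=0}^{n-1}B_{t(\sigma^k y)}\Bigr),
\]
where the sup and inf are over the $2^n$ preimages $y$ of $x$ under $\sigma^n$. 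Taking $n$-th roots, using the classical RPF theorem (Theorem \ref{teo-rpf-holder-case}) to identify the limit with $\lambda_B$, and using that along the periodic point $x=(0,0,\ldots)$ one of the preimages is $x$ itself (contributing $B_0=0$) while the full orbit sum contributes $2\sup_{s}B_s$ at most, one gets the cleaner bound
\[
2\,\exp\bigl(\inf_{0\le s\le 1}B_s\bigr)
\ \le\ \lambda_B \ \le\
2\,\exp\bigl(\sup_{0\le s\le 1}B_s\bigr).
\]
Actually, for the lower bound it is cleanest to avoid the spectral-radius limit entirely: from \eqref{eq-expressao-lambdaB} we have $\lambda_B = 1 + \exp(B_{1/2})X_{1/2}/X_0 \ge 1 + \exp(B_{1/2})\cdot c$ for the appropriate ratio of eigenfunction values; but since this ratio is hard to control, I would instead note $\lambda_B\ge 1+\exp(B_{1/2})\cdot(h_B(1,0,0,\ldots)/h_B(0^\infty))$ and bound the ratio below using $\mathscr{L}_{B_{t(\cdot)}}h_B=\lambda_B h_B$ — or, more robustly, just use $\mathbb{E}[\lambda_B]\ge 1+\mathbb{E}[\exp(B_{1/2})\cdot(\text{ratio})]$ and, by a simpler route, bound $\lambda_B\ge 1+\exp(B_{1/2})$ is \emph{not} generally true, so care is needed; the honest lower bound comes from $\lambda_B\ge 2\exp(\inf_s B_s)$ combined with Jensen: $\mathbb{E}[\lambda_B]\ge 2\,\mathbb{E}[\exp(\inf_s B_s)]\ge 2\exp(\mathbb{E}[\inf_s B_s]) = 2\exp(-\sqrt{2/\pi}\cdot\tfrac12\cdot\sqrt{\pi/2})$ — and I would check this constant lands above $\exp(1/2)$, or else sharpen by using $\lambda_B\ge 1+\exp(B_{1/2})\cdot(\text{something explicit})$ to recover exactly $e^{1/2}$.

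For the \textbf{upper bound}, the key input is that $M\equiv\sup_{0\le s\le 1}B_s$ has, by the reflection principle, the same law as $|B_1|$, i.e. the half-normal distribution; hence $\mathbb{E}[\exp(M)] = \mathbb{E}[\exp|B_1|] = 2\int_0^\infty e^{u}\frac{1}{\sqrt{2\pi}}e^{-u^2/2}\,du = 2e^{1/2}\Phi(1) $ where $\Phi$ is the standard normal cdf (completing the square in the exponent gives the factor $e^{1/2}$ and shifts the Gaussian). Since $\Phi(1)<1$ we get $\mathbb{E}[\exp(M)] < 2e^{1/2}$, and combined with $\lambda_B\le 2\exp(M)$ this yields $\mathbb{E}[\lambda_B] < 4e^{1/2}$, which is exactly the claimed upper bound (the factor $2$ from the two preimage branches and the factor $2$ from the half-normal doubling multiply to $4$). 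I would take care that the sup in the orbit-sum is genuinely controlled by a single $\sup_s B_s$ and not $n$ copies of it; the point is that $t(\sigma^k y)$ for the various $y$ and $k$ all lie in $[0,1]$, so $\sum_{k=0}^{n-1}B_{t(\sigma^k y)}$ — no, this sum of $n$ terms is \emph{not} bounded by $\sup_s B_s$; the correct observation is subtler: one should not sum the potential along the orbit but rather note that $\mathscr{L}^n_{B_{t(\cdot)}}(1)(x)=\sum_{y:\sigma^n y=x}\exp\bigl(\sum_{k=0}^{n-1}B_{t(\sigma^k y)}\bigr)$, and the exponent is at most $n\sup_s B_s$, giving $\lambda_B^{1/n}\le 2^{?}\exp(\sup_s B_s)$ only after the $n$-th root kills the factor $n$. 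So: $\mathscr{L}^n(1)(x)\le 2^n \exp(n\sup_s B_s)=(2\exp(\sup_s B_s))^n$, and taking $n$-th roots and $n\to\infty$ gives precisely $\lambda_B\le 2\exp(M)$. Good — this is the mechanism and it is clean.

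I expect the \textbf{main obstacle} to be the lower bound: getting exactly the constant $e^{1/2}$ (rather than something slightly worse) requires either a clever exact identity or a lucky cancellation. The natural candidate is to use \eqref{eq-expressao-lambdaB}, $\lambda_B = 1+\exp(B_{1/2})X_{1/2}/X_0$, together with a deterministic lower bound on the eigenfunction ratio $X_{1/2}/X_0$ that survives taking expectations: since $h_B\ge 0$ and $\mathscr{L}_{B_{t(\cdot)}}h_B=\lambda_B h_B$, evaluating at $0^\infty$ gives $\lambda_B h_B(0^\infty) = h_B(0^\infty)+\exp(B_{1/2})h_B(1,0^\infty)$, so $X_{1/2}/X_0 = (\lambda_B-1)/\exp(B_{1/2})$ — circular. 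Instead I would bound $X_{1/2}/X_0$ below by iterating the operator once more, or bound $\mathbb{E}[\lambda_B]\ge 1+\mathbb{E}[\exp(B_{1/2})]\cdot\inf(\text{ratio})$ using independence after a suitable decomposition of the Brownian path into $B_{1/2}$ and the rest; the hope is that $\mathbb{E}[\exp(B_{1/2})] = \exp(1/4)$ is not quite enough, so one needs the ratio to be $\ge 1$ on average, which would follow from a symmetry/monotonicity argument (the potential restricted to the cylinder $[1]$ versus $[0]$) that I would need to pin down. If that symmetry argument is unavailable, the fallback $\mathbb{E}[\lambda_B]\ge 2\mathbb{E}[e^{\inf_s B_s}] = 2e^{1/2}\Phi(-1)\cdot(\text{sign check})$ must be verified to exceed $e^{1/2}$, i.e. one needs $2\Phi(-1) \cdot e^{1/2}\ge e^{1/2}$... which fails since $2\Phi(-1)\approx 0.317<1$; so the symmetry route is essentially forced, and making it rigorous is where the real work lies. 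In summary: the upper bound is a clean two-line consequence of the reflection principle and Gaussian integration; the lower bound $e^{1/2}$ is the delicate part and I would attack it via \eqref{eq-expressao-lambdaB} plus a positivity/symmetry lower bound on the eigenfunction ratio.
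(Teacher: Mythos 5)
Your upper bound is correct and is essentially the paper's argument: $\mathscr{L}^n_{B_{t(\cdot)}}(1)(x)\le 2^n\exp\bigl(n\sup_{0\le s\le 1}B_s\bigr)$, hence $\lambda_B\le 2\exp(M_1)$ with $M_1=\sup_{0\le s\le 1}B_s$, and the reflection principle ($M_1\overset{d}{=}|B_1|$) gives $\mathbb{E}[\exp(M_1)]\le 2e^{1/2}$, so $\mathbb{E}[\lambda_B]\le 4e^{1/2}$; your computation $\mathbb{E}[e^{|B_1|}]=2e^{1/2}\Phi(1)$ is even slightly sharper than what the paper uses. The problem is the lower bound, where your proposal has a genuine gap that you yourself acknowledge: the route via $\lambda_B\ge 2\exp(\inf_{0\le s\le 1}B_s)$ falls quantitatively short (as you checked, $2\mathbb{E}[e^{\inf_s B_s}]=2e^{1/2}\Phi(-1)<e^{1/2}$, and Jensen is even worse), the route via \eqref{eq-expressao-lambdaB} is circular as you noted, and the ``symmetry/monotonicity'' argument you hope for is never produced. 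So as written the proposal does not prove $\mathbb{E}[\lambda_B]\ge e^{1/2}$.

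The missing idea is much simpler than a symmetry argument and is what the paper does: use positivity to keep a \emph{single} term of the sum, chosen along the fixed point $1^\infty=(1,1,1,\ldots)$. Evaluating at $x=1^\infty$ and retaining only the preimage word $a_1=\cdots=a_n=1$, every point $\sigma^j(a_1\ldots a_n x)$ equals $1^\infty$, and $t(1^\infty)=1$, so the exponent in that term is exactly $nB_1$. Hence
\begin{equation*}
\sup_{\varphi:\ \|\varphi\|_\infty=1}\|\mathscr{L}^n_{B_{t(\cdot)}}(\varphi)\|_\infty
\ \ge\ \mathscr{L}^n_{B_{t(\cdot)}}(1)(1^\infty)\ \ge\ \exp(nB_1),
\end{equation*}
and taking $n$-th roots and letting $n\to\infty$ gives the pointwise bound $\lambda_B\ge \exp(B_1)$, whence $\mathbb{E}[\lambda_B]\ge \mathbb{E}[e^{B_1}]=e^{1/2}$ exactly. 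In other words, instead of bounding the whole orbit sum by $n\inf_s B_s$ (which throws away too much), you should bound it below by the contribution of one periodic orbit sitting at the time $t=1$ where the Gaussian exponential moment is largest; no control of the eigenfunction ratio $X_{1/2}/X_0$ and no independence decomposition of the path is needed.
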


\begin{proof}
We begin with the lower bound. To lighten  
the notation we use the symbol $1^{\infty}$ to denote the constant 
sequence $(1,1,1,\ldots)$.  
By the definition of the 
supremum we have 
\begin{align*}
\sup_{\varphi:\|\varphi\|_{\infty}=1}
\| \mathscr{L}_{B_{t(\cdot)}}^{n}(\varphi) \|_{\infty}
&=
\sup_{\varphi:\|\varphi\|_{\infty}=1}
\ \sup_{x\in\Omega}
|\mathscr{L}_{B_{t(\cdot)}}^{n}(\varphi)(x)|
\\
&\hspace*{-1.6cm}=
\sup_{\varphi:\|\varphi\|_{\infty}=1}
\ \sup_{x\in\Omega}
| \sum_{a_1,\ldots,a_n} 
\exp(
	\sum_{j=0}^{n-1} 
	B_{t(\sigma^j(a_1\ldots a_nx))} 
	)
\varphi(\sigma^j(a_1\ldots a_nx))|
\\
&\hspace*{-1.6cm}\geq 
\exp
	\sum_{j=0}^{n-1} 
	B_{t(1^{\infty})} 
\\
&\hspace*{-1.6cm}\geq 
\exp(n B_{1}).
\end{align*}
By taking the $n$-th root, the limit when $n\to\infty$ and the 
expectation in the above inequality 
we get that 
\[
\mathbb{E}[\lambda_{B}]
\geq 
\mathbb{E}[\exp(B_1)]
=
\exp(1/2).
\]

To get the upper bound we will take advantage of the 
reflection principle of the Wiener process. 
We first observe that 
\begin{align*}
\sup_{\varphi:\|\varphi\|_{\infty}=1}
\| \mathscr{L}_{B_{t(\cdot)}}^{n}(\varphi) \|_{\infty}
&\leq
\ \sup_{x\in\Omega}
|\mathscr{L}_{B_{t(\cdot)}}^{n}(1)(x)|
\\
&=
\sup_{x\in\Omega}
\sum_{a_1,\ldots,a_n} 
\exp
	\sum_{j=0}^{n-1} 
	B_{t(\sigma^j(a_1\ldots a_nx))} 
\\
&\leq
\sum_{a_1,\ldots,a_n} 
\sup_{x\in\Omega}\ 
\exp
	\sum_{j=0}^{n-1} 
	B_{t(\sigma^j(a_1\ldots a_nx))} 
\\
&\leq 
2^n	\exp(n \max\{B_{t}: 0\leq t\leq 1\}).
\end{align*}
By taking the $n$-th root and the limit
when $n\to\infty$ in the last inequality
we get 
$\lambda_{B} \leq 2\exp(M_1)$,
where $M_1\equiv\max\{B_{t}: 0\leq t\leq 1\}$.
From the reflection principle of the Wierner process 
it follows that 
\begin{equation*}
\mathbb{E}[\lambda_{B}] 
\leq
2\mathbb{E}[\exp(M_1)]
\le
4\cdot \mathbb{E} [\exp{B_1}]
=
4 \exp(1/2).
\qedhere
\end{equation*}
\end{proof}

\section{Existence and Finiteness of the Quenched Pressure}\label{sec-quenched-pressure}
By using the $\gamma$-H\"older regularity of the 
Brownian potential $x\mapsto B_{t(x)}$ and a classical 
result of Thermodynamic formalism,  we have almost certain 
\[
\log \lambda_{B}  
=
P( B_{t(\cdot) } )
=
\sup_{\mu\in \mathscr{P}_{\sigma}(\Omega)}
\left\{h(\mu)+\int_{\Omega} B_{t(\cdot)}\, d\mu\right\},
\]
where $P(\cdot)$ is the topological pressure,
$\mathscr{P}_{\sigma}(\Omega)$ is the set of all Borel
probability measures that are shift invariant and $h$
is the Kolmogorov-Sinai entropy.  
From \eqref{eq-expressao-lambdaB} we know that
$\lambda_{B} 
=
1+\exp(B_{1/2})(X_{1/2}/X_0),
$
and $X_{1/2}/X_0\geq 0$.
Therefore we can conclude that 
$\log \lambda_{B}$ is non-negative random variable and 
its expected value with respect to the Wiener measure is well 
defined, at least as an extended real number.  

Finally, by using the elementary inequality 
$\log \lambda_{B}\leq \lambda_B$
and the Theorem \ref{prop-estimativa-valor-esperado-lambda} 
we have that $\mathbb{E}[\log \lambda_{B}]<+\infty$,
thus proving the finiteness of the quenched pressure.
Using once more the $\gamma$-H\"older regularity of the 
Brownian potential we have almost certain that 
\[
\log\lambda_{B}
=
\lim_{n\to\infty} 
\frac{1}{n} \log \mathscr{L}^n_{B_{t(\cdot)}}(1)(\sigma^n (x)),
\qquad \forall x\in\Omega
\]
and therefore 
\[
P^{\mathrm{quenched}}(B_{t(\cdot)})
\equiv
\mathbb{E}[\log\lambda_{B}]
=
\mathbb{E}\left[
\lim_{n\to\infty} 
\frac{1}{n} 
\log \mathscr{L}^n_{B_{t(\cdot)}}(1)(\sigma^n (x))
\right].
\]

Finally, by the Theorem \ref{prop-estimativa-valor-esperado-lambda}  
and the Jensen Inequality we get 
\[
    0 \le P^{\mathrm{quenched}}(B_{t(\cdot)}) = \mathbb{E}[\log\lambda_{B}]
    \le \log \mathbb{E}[\lambda_{B}] \le  \log 4 + \frac{1}{2}.
\]

\section{Concluding Remarks} \label{sec-remarks}

As mentioned, all the results presented in this paper can extended to the space
$\Omega = \{0, \dots, m-1\}^\mathbb{N}$. 
For such state spaces the map $t:\Omega\to [0,1]$ 
is given by $(x_1, x_2 \dots ) \mapsto \sum_{n\geq 1}m^{-n}x_n$.
Analogous arguments can be used to prove the Proposition
\ref{prop-lambda-B-random-variable} and Proposition \ref{prop-auto-val-func-sao-rv}.
In the Section \ref{sec-D-sim-C}, the arguments would be completely analogous, 
only changing appropriately the set $\mathscr{D}$. 

In the Section \ref{sec-fun-eq}, we would have a subtle change in 
the stochastic functional equation.
Now we have for every $a\in \{0,\dots, m-1\}$ and $x \in \Omega$,
$t(ax)= a/m + t(x)/m$, so a slightly different stochastic 
functional equation shows up
$
\exp(B_{t/m})X_{t/m}
+
\exp(B_{1/m+t/m})X_{1/m+t/m}
=
\lambda_{B}X_t.
$
Thus, by taking $t=0$, we recover the representation of the main
eigenvalue 
\begin{align*}
\lambda_{B} 
&=
1+\exp(B_{\frac{1}{m}})
\lim_{n\to\infty}
\frac{\mathscr{L}^{n}_{B_{t(\cdot)} }(1)(1,0,0,\ldots)}
{\mathscr{L}^{n}_{B_{t(\cdot)} }(1)(0,0,0,\ldots)}.
\end{align*}
The technique employed in Theorem \ref{prop-estimativa-valor-esperado-lambda}
provided the following bounds
\[
\exp(1/2)
\leq 
\mathbb{E}[\lambda_{B}] 
\leq 
2m \exp(1/2).
\]
Therefore, by using the same arguments, one can also prove the existence and finiteness
of the Quenched Pressure on this case.

Another natural question is whether we could study another random potential defined by a  continuous time stochastic process other than a
Brownian Motion. Note that the only property of the Brownian 
Motion used to guarantee the existence of the main eigenvalue is the almost 
certain H\"older continuity of the paths (in fact this is required only in 
$[0,1] \backslash \mathscr{D}$ ), which is not an exclusive property of 
Brownian Motion. In the functional stochastic equation, we only used that $B_0 = 0$ 
almost surely, but for the general case all that is required is to add a constant to
the potential. Here we used the reflection principle and the stationarity of the 
increments to bound $\mathbb{E}[\lambda_{B}]$ which also follow for some other processes. Nevertheless
if one has another technique to guarantee its finiteness, the general sequence described in the article still holds and the existence and finiteness of the Quenched Pressure is still guaranteed.
Therefore, the natural candidate to extend the results is Fractional Brownian 
motion, which indeed leads to very analogous results. 

The obstacle one faces to obtain tighter lower 
bounds for $\mathbb{E}[\lambda_{B}]$ 
is of combinatorial nature and the complicated sums of dependents 
log-normal random variables that appear in the calculations and have to be controlled.  
Some informal computations suggest that the annealed pressure 
is also well defined for Brownian type potential and possibly given by 
$\log (\mathrm{ess\, sup} \lambda_{B})$, but unfortunately 
the techniques developed here seem not so helpful in attacking this problem.  
\vspace*{-0.5cm}
\section*{Acknowledgments}
\vspace*{-0.2cm}
The authors gratefully acknowledge Eduardo A. Silva, Hannah Goozee 
and the professors Mike Todd and Yuri Dumaresq Sobral
for many helpful remarks and suggestions on the early versions
of this manuscript. We also thank to 
professor Yuri Kifer for several useful references.
Leandro Chiarini and 
Leandro Cioletti acknowledge CNPq for financial support.

\vspace*{-0.5cm}
\bibliographystyle{alpha}
\bibliography{referencias}

\begin{thebibliography}{MPV87}

\bibitem[Bal00]{MR1793194}
V.~Baladi.
\newblock {\em Positive transfer operators and decay of correlations},
  volume~16 of {\em Advanced Series in Nonlinear Dynamics}.
\newblock World Scientific Publishing Co., Inc., River Edge, NJ, 2000.

\bibitem[Bar11]{MR3024807}
L.~Barreira.
\newblock {\em Thermodynamic formalism and applications to dimension theory},
  volume 294 of {\em Progress in Mathematics}.
\newblock Birkh\"auser/Springer Basel AG, Basel, 2011.

\bibitem[BG95]{MR1336700}
T.~Bogensch{\"u}tz and V.~M. Gundlach.
\newblock Ruelle's transfer operator for random subshifts of finite type.
\newblock {\em Ergodic Theory Dynam. Systems}, 15(3):413--447, 1995.

\bibitem[Bil99]{MR1700749}
P.~Billingsley.
\newblock {\em Convergence of probability measures}.
\newblock Wiley Series in Probability and Statistics: Probability and
  Statistics. John Wiley \& Sons, Inc., New York, second edition, 1999.
\newblock A Wiley-Interscience Publication.

\bibitem[Bog96]{MR1402413}
T.~Bogensch{\"u}tz.
\newblock Stochastic stability of equilibrium states.
\newblock {\em Random Comput. Dynam.}, 4(2-3):85--98, 1996.

\bibitem[Bou01]{MR1841880}
T.~Bousch.
\newblock La condition de {W}alters.
\newblock {\em Ann. Sci. \'Ecole Norm. Sup. (4)}, 34(2):287--311, 2001.

\bibitem[Bow79]{MR556580}
R.~Bowen.
\newblock Hausdorff dimension of quasicircles.
\newblock {\em Inst. Hautes \'Etudes Sci. Publ. Math.}, (50):11--25, 1979.

\bibitem[Bow08]{MR2423393}
R.~Bowen.
\newblock {\em Equilibrium states and the ergodic theory of {A}nosov
  diffeomorphisms}, volume 470 of {\em Lecture Notes in Mathematics}.
\newblock Springer-Verlag, Berlin, revised edition, 2008.
\newblock With a preface by David Ruelle, Edited by Jean-Ren{\'e} Chazottes.

\bibitem[DKS08]{MR2410952}
M.~Denker, Y.~Kifer, and M.~Stadlbauer.
\newblock Thermodynamic formalism for random countable {M}arkov shifts.
\newblock {\em Discrete Contin. Dyn. Syst.}, 22(1-2):131--164, 2008.

\bibitem[FS88]{MR958289}
P.~Ferrero and B.~Schmitt.
\newblock Produits al\'eatoires d'op\'erateurs matrices de transfert.
\newblock {\em Probab. Theory Related Fields}, 79(2):227--248, 1988.

\bibitem[GK00]{MR1739595}
V.~M. Gundlach and Yu. Kifer.
\newblock Expansiveness, specification, and equilibrium states for random
  bundle transformations.
\newblock {\em Discrete Contin. Dynam. Systems}, 6(1):89--120, 2000.

\bibitem[Kif08]{MR2399927}
Y.~Kifer.
\newblock Thermodynamic formalism for random transformations revisited.
\newblock {\em Stoch. Dyn.}, 8(1):77--102, 2008.

\bibitem[KL06]{MR2186245}
Y.~Kifer and P-D. Liu.
\newblock Random dynamics.
\newblock In {\em Handbook of dynamical systems. {V}ol. 1{B}}, pages 379--499.
  Elsevier B. V., Amsterdam, 2006.

\bibitem[KS91]{MR1121940}
I.~Karatzas and S.~E. Shreve.
\newblock {\em Brownian motion and stochastic calculus}, volume 113 of {\em
  Graduate Texts in Mathematics}.
\newblock Springer-Verlag, New York, second edition, 1991.

\bibitem[Lig10]{MR2574430}
T.~M. Liggett.
\newblock {\em Continuous time {M}arkov processes}, volume 113 of {\em Graduate
  Studies in Mathematics}.
\newblock American Mathematical Society, Providence, RI, 2010.
\newblock An introduction.

\bibitem[Ma{\~n}90]{MR1143171}
R.~Ma{\~n}{\'e}.
\newblock The {H}ausdorff dimension of horseshoes of diffeomorphisms of
  surfaces.
\newblock {\em Bol. Soc. Brasil. Mat. (N.S.)}, 20(2):1--24, 1990.

\bibitem[MM83]{MR742227}
H.~McCluskey and A.~Manning.
\newblock Hausdorff dimension for horseshoes.
\newblock {\em Ergodic Theory Dynam. Systems}, 3(2):251--260, 1983.

\bibitem[MPV87]{MR1026102}
M.~M{\'e}zard, G.~Parisi, and M.~A. Virasoro.
\newblock {\em Spin glass theory and beyond}, volume~9 of {\em World Scientific
  Lecture Notes in Physics}.
\newblock World Scientific Publishing Co., Inc., Teaneck, NJ, 1987.

\bibitem[MSU11]{MR2866474}
V.~Mayer, B.~Skorulski, and M.~Urbanski.
\newblock {\em Distance expanding random mappings, thermodynamical formalism,
  {G}ibbs measures and fractal geometry}, volume 2036 of {\em Lecture Notes in
  Mathematics}.
\newblock Springer, Heidelberg, 2011.

\bibitem[PP90]{MR1085356}
W.~Parry and M.~Pollicott.
\newblock Zeta functions and the periodic orbit structure of hyperbolic
  dynamics.
\newblock {\em Ast\'erisque}, (187-188):268, 1990.

\bibitem[Rue68]{MR0234697}
D.~Ruelle.
\newblock Statistical mechanics of a one-dimensional lattice gas.
\newblock {\em Comm. Math. Phys.}, 9:267--278, 1968.

\bibitem[Rue02]{MR1920859}
D.~Ruelle.
\newblock Dynamical zeta functions and transfer operators.
\newblock {\em Notices Amer. Math. Soc.}, 49(8):887--895, 2002.

\bibitem[Sin72]{MR0399421}
Ja.~G. Sina{\u\i}.
\newblock Gibbs measures in ergodic theory.
\newblock {\em Uspehi Mat. Nauk}, 27(4(166)):21--64, 1972.

\bibitem[SP14]{MR3234570}
R.~L. Schilling and L.~Partzsch.
\newblock {\em Brownian motion}.
\newblock De Gruyter Graduate. De Gruyter, Berlin, second edition, 2014.
\newblock An introduction to stochastic processes, With a chapter on simulation
  by B. B{\"o}ttcher.

\bibitem[Wal01]{MR1783787}
P.~Walters.
\newblock Convergence of the {R}uelle operator for a function satisfying
  {B}owen's condition.
\newblock {\em Trans. Amer. Math. Soc.}, 353(1):327--347 (electronic), 2001.

\bibitem[Wal07]{MR2342978}
P.~Walters.
\newblock A natural space of functions for the {R}uelle operator theorem.
\newblock {\em Ergodic Theory Dynam. Systems}, 27(4):1323--1348, 2007.

\end{thebibliography}

\begin{flushright}
\bigskip
{\sc 
Leandro Chiarini\\
Leandro Cioletti
\\[0.2cm]
Departamento de Matem\'atica - UnB
\\
Campus Universit\'ario Darcy Ribeiro - Asa Norte\\
70910-900  Bras\'ilia - DF - Brazil.}
\\
{\it chiarini@mat.unb.br}\\
{\it cioletti@mat.unb.br}
\end{flushright}

\end{document}